\newcommand{\N}{{\Bbb N}}
\newcommand{\D}{{\Bbb D}}
\def\l{\lambda}
\def\d{\mathcal{D}}
\def\z{\zeta}
\newtheorem{Theorem}{Theorem}[section]
\newtheorem{Corollary}{Corollary}[section]
\newtheorem{Remark}{Remark}[section]
\title{Embeddings of vector-valued Bergman spaces}
\begin{document}

\author{Olivia Constantin}

\address{
Faculty of Mathematics,
University of Vienna,
Oskar-Morgenstern-Platz 1, 1090 Vienna,
Austria}
\email{olivia.constantin@univie.ac.at}

\author{Laura G\u avru\c ta}
\address{
Faculty of Mathematics,
University of Vienna,
Oskar-Morgenstern-Platz 1, 1090 Vienna,
Austria}
\email{laura-elena.gavruta@univie.ac.at}

\thanks{The authors were supported by the FWF project
P 24986-N25. }
\date{}

\subjclass[2010]{ 30H20, 47B38}

\keywords{Carleson embeddings, vector-valued Bergman spaces}

\begin{abstract}
We show that a dyadic version of the Carleson embedding theorem for the Bergman space extends to vector-valued functions and operator-valued
measures. This is in contrast to a result by Nazarov, Treil, Volberg in the context of the Hardy space. We also discuss some embeddings for analytic 
vector-valued functions.
\end{abstract}
\maketitle

\section{Introduction}
Let $\D$, $\mathbb{T}$ denote the unit disc, respectively the unit circle, in the complex plane.
We denote by $\d$  the family of dyadic arcs  $I\subseteq\mathbb{T}$, i.e. sets of the form
$$
I=\{e^{it}\,:\, \frac{2\pi k}{2^n}\le  t\le \frac{2\pi (k+1)}{2^n}\}, \quad k=0,1, ..., 2^n-1,
$$
for $n\in\N$. For each arc $I$, we denote its length by $l(I)$ and we consider the corresponding Carleson "square"
$Q_I=\{z\in \D\,: \, z/|z|\in I,\ 1- l(I)/2\pi\le |z|\}$. Let $T_I$ denote the upper half of $Q_I$,  i.e. 
$T_I=\{z\in Q_I\,: \, \ |z|<1- l(I)/4\pi\}$. 

Given a positive Borel measure $\mu$ on $\D$, a classical theorem by Carleson states that the Hardy space $H^2$
is continuously contained in $L^2(d\mu)$ if and only if there exists a constant $c>0$ such that $\mu(Q_I)\le c \, l(I)$, for 
all $I\in \mathcal{D}$, or, equivalently,
%. This statement  is equivalent to the fact that  
the embedding operator
$$D f(z):= \sum_{I\in\d} (\frac{1}{l(I)} \int_I f dm)\, \chi_{T_I} (z),\quad z\in\D,$$
is bounded from $L^2(dm)$ to $L^2(d\mu)$, where $m$ denotes the Lebesgue measure on $\mathbb{T}$ (see \cite{ntv}). This last equivalent formulation of the 
result is also called the dyadic Carleson embedding theorem. 

A result by Nazarov, Treil, Volberg \cite{ntv} shows that the dyadic Carleson
embedding theorem does not extend to Hilbert space-valued
functions and operator-measures, while sharp dimension-dependent estimates
for matrix-valued measures are provided in \cite{nptv}. 
More precisely, given an $n$-dimensional Hilbert space $H$ and a positive $n\times n$ matrix-valued Borel measure $\mu$ on $\D$,  
the operator $D$ (defined analogously for vector-valued functions) is bounded from $L^2(H, dm)$ to $L^2(H,d\mu)$ if and only if $\mu$ has finite
Carleson intensity $\|\mu\|_C$ (see \cite{nptv} for the definition) and the following {\it sharp} dimensional estimate holds
\begin{equation}\label{dimdepending}
\|D\|\lesssim C \log n \|\mu\|_C^{1/2}.
\end{equation}
Here $L^2(H, dm)$ denotes the space of $H-$valued functions that are square integrable with respect to $m$ on $\mathbb{T}$,
while  $L^2(H,d\mu)$ represents the space of $H-$valued functions that are square integrable with respect to $\mu$ on $\D$.

%More precisely,  given an operator valued measure $\mu$ on $\D$, one defines its Carleson
%intensity by
%$$
%\|\mu\|^2_C:= \sup\frac{1}{l(I)}\|\mu(Q_I)\|
%$$

This result is one in a series of deep investigations (see \cite{tv,nptv,pott}),  which reveal one and the same
phenomenon:  
natural generalizations of some basic results  from the scalar-valued setting to vector-valued Hardy spaces continue to hold as long as
the "target-space" has finite dimension, with the involved estimates depending on
this dimension in such a way that the results fail in the infinite-dimensional case.
 Relevant examples that illustrate this behaviour  appear in the study of the Riesz projection, 
 respectively of Hankel operators with operator-valued symbols on vector-valued Hardy spaces.
Remarkably enough,  when considering the analogous problems for the Bergman projection,
respectively for Hankel operators on vector-valued Bergman
spaces, one encounters a completely different situation, namely, in this context the scalar results have natural generalizations
to the vector-valued setting that are independent of the dimension of the "target space" (see \cite{aaoc,1}).

In this note we show that, when considering Carleson embeddings for vector-valued Bergman 
spaces, one finds a behaviour consistent with the one described above. We start with the observation that
the disc-analogue of the dyadic Carleson embedding theorem continues to hold for operator-valued
measures. 
Subsequently, we use techniques developed in \cite{aaoc} in order to discuss some embeddings for analytic vector-valued functions.
More precisely, we characterize the nonnegative operator-valued functions $G:\D\rightarrow \mathcal{B}(H)$ for which the 
vector-valued  Bergman spaces with operator-valued B\'ekoll\'e-Bonami weights  are continuously contained in $L^2(H, G\,dA)$, where $A$ denotes
the normalized Lebesgue area measure on $\D$.  The obtained characterizations are independent of dimension, which is mainly
due to the fact that the Carleson condition expressed in terms of Carleson "squares" is equivalent to the analogue condition
for top halves of Carleson "squares". We would like to emphasize that our Carleson embedding theorem for analytic 
functions is equivalent to its dyadic version even when the target space $H$ has infinite dimension.
We refer to \cite{blasco} for related results concerning multipliers on vector valued Bergman spaces.
Finally, we 
present a short application of our embedding theorem to integration operators of Volterra type with operator-valued symbols.

\section{A remark on dyadic Carleson embeddings}
Let us first introduce some notation.
For two real-valued (respectively, positive operator-valued) functions $E_1,E_2$ we write $E_1\sim
E_2$, or $E_1\lesssim E_2$, if there exists a positive constant $k$
independent of the argument such that $\frac{1}{k} E_1\leq E_2\leq k
E_1$, respectively $E_1\leq k E_2$.

We denote by $A$ the normalized Lebesgue area measure on $\D$.
Let $H$ be a Hilbert space and consider a positive operator-valued Borel measure $\mu$ on $\D$, i.e.
a countably additive function defined on the Borel measurable subsets of $\D$ with values in the set of
non-negative operators on $H$. 
We define its Carleson intensity as follows
$$\|\mu\|_C:=\sup \frac{\|\mu(Q_I)\|}{A(Q_I)}=\sup_{I\in\d} \sup_{e\in H, \|e\|=1} \frac{\langle\mu(Q_I)e,e\rangle}{A(Q_I)}$$ 
 We consider the following analogue for the disc of the embedding operator $D$ defined above
 $$
 B f(z):= \sum_{I\in\d} \Bigl(\frac{1}{A(T_I)} \int_{T_I} f dA\Bigr)\, \chi_{T_I} (z), \quad z\in\D,\quad f\in L^2(H, dA).
 $$ 
We are concerned with the boundedness of $B$ from $L^2(H, dm)$ to $L^2(H, d\mu)$.
As pointed out in \cite{ntv}, defining the integral $\int_\D \langle d\mu(z) f(z), f(z)\rangle$ for an operator-valued
measure $\mu$ is, in general, a delicate issue.  However, since $B f$ is a vector-valued "step-function", one has
$$\int_\D \langle d\mu(z) (Bf)(z),(Bf)(z)\rangle=
\sum_{I\in\d} \langle \mu(T_I) (\frac{1}{A(T_I)} \int_{T_I} f dA ), (\frac{1}{A(T_I)} \int_{T_I} f dA)\rangle.
$$
The classical Carleson embedding theorem for the scalar valued Bergman space $L_a^2$ states 
that, given a positive scalar Borel measure $\mu$ on $\D$, the space $L_a^2$
is continuously contained in $L^2(d\mu)$ if and only if $\|\mu\|_C<\infty$, or, equivalently the operator
$B$ is bounded from $L^2(dA)$ to $L^2(d\mu)$.

In contrast to the dimension-depending sharp estimate (\ref{dimdepending}) proven  in \cite{nptv}, 
it turns out that, in our setting, we have

\begin{Remark}\label{Dyadic} Assume $dim\, H\le \infty$.  The operator 
$B$ is bounded from $L^2(H, dA)$ to $L^2(H, d\mu)$ if and only if  $\mu$ has finite Carleson intensity.
In addition, there are absolute constants $k_1,k_2>0$ such that the following inequalities hold
$$
k_1\|\mu\|^{1/2}_C\le \|B\|\le k_2 \|\mu\|^{1/2}_C.
$$
\end{Remark}

\begin{proof} The proof is rather straight-forward, but we include it for the sake of completeness.
Assume first that $\|\mu\|_C<\infty$. Notice that $\mu(T_I)\le \mu (Q_I)$ and that $A(T_I)\sim A(Q_I)$, for $I\in\d$. Then, for $f\in L^2(H, dA)$, we have
\begin{eqnarray*}
\int_\D \langle d\mu(z) (Bf)(z),(Bf)(z)\rangle&=&
\sum_{I\in\d} \langle \mu(T_I) (\frac{1}{A(T_I)} \int_{T_I} f dA ), (\frac{1}{A(T_I)} \int_{T_I} f dA)\rangle\\
&\lesssim& \|\mu\|_C \sum_{I\in\d} \frac{1}{A(T_I)}\Bigl\| \int_{T_I} f dA\Bigr\|^2\\
&\le& \|\mu\|_C \sum_{I\in\d} \int_{T_I} \|f\|^2 dA \\
&=& \|\mu\|_C  \int_{\D} \|f\|^2 dA,
\end{eqnarray*}
by the Cauchy-Schwarz inequality.

Conversely, suppose $B$ is bounded. For every $e\in H$ and $I\in\d$ we have $B(\chi_{T_I} e)=\chi_{T_I} e$, and 
hence
$$
\langle \mu(T_I) e,e\rangle=\|B(\chi_{T_I} e)\|^2_{L^2(H,d\mu)}\le \|B\|^2 \|\chi_{T_I} e\|^2_{L^2(H,dA)}= \|B\|^2 A(T_I)\|e\|^2,
$$
which implies 
$$
\alpha:=\sup_{I \in \d} \frac{\|\mu(T_I)\|}{A(T_I)}\le\|B\|^2.
$$
As in the scalar case, it is easily seen that $\alpha$ is 
comparable with $\|\mu\|_C$. Indeed, for any $I\in\d$ we can write the Carleson "square" $Q_I$ as an infinite
union of top halves of Carleson "squares" corresponding to the dyadic subintervals of $I$, 
$$
Q_I=\cup_{n=0}^\infty \cup_{k=1}^{2^n} T^k_{l(I)/2^n},
$$
with $A(T^k_{l(I)/2^n})\sim (\frac{l(I)}{2^n})^2$ for $n \in \N$ and $1 \le k \le 2^n$. This implies
\begin{eqnarray*}
\|\mu(Q_I)\|&\le& \sum_{n=0}^\infty \sum_{k=1}^{2^n} \|\mu(T^k_{l(I)/2^n)}\|\le  \alpha \sum_{n=0}^\infty \sum_{k=1}^{2^n} A(T^k_{l(I)/2^n})
\\
&\lesssim&
 \alpha\sum_{n=0}^\infty \sum_{k=1}^{2^n} (\frac{l(I)}{2^n})^2\lesssim  \alpha l(I)^2\lesssim \alpha A(Q_I).
\end{eqnarray*}
The inequality $\| \mu \|_C \gtrsim \alpha$ is trivial.
\end{proof}

\section{Embeddings of vector-valued Bergman spaces}

We begin by introducing the concepts we are working with.
 
Given a separable Hilbert space $H$, we denote by $\mathcal{B}(H)$ the space of bounded linear operators on $H$. 
We say that a nonnegative operator-valued function $G:\mathbb{D}\rightarrow\mathcal{B}(H)$ (i.e. $G(z)$ is a nonnegative 
operator on $H$, a.e. $z\in\D$)  is {\it integrable} on $\D$ 
%if the operator-valued integral of $G$ with respect to the area measure exists, that is,
if for any $x,y\in\mathcal{H}$ the scalar function $$z\mapsto\langle G(z) x,y\rangle$$ is integrable on $\D$ and we have
$$\bigg|\int_{\mathbb{D}}\langle G(z)x,y\rangle dA(z)\bigg|\leq C \|x\|\|y\|,$$ for some constant $C$ independent of $x$ and $y.$
The bounded linear operator defined this way will be denoted by $\displaystyle \int_{\mathbb{D}}G dA.$

For $\eta>-1$, we denote $dA_\eta(z)=(\eta +1)(1-|z|)^\eta dA(z)$. 
We consider operator-valued weights $W:\mathbb{D}\rightarrow\mathcal{B}(\mathcal{H})$ such that 
\begin{enumerate}%[($i$)]
\item $W(z)$ is a nonnegative operator that is invertible a.e. $z\in\D$;
\item $(1-|z|^2)^\eta W$ and $(1-|z|^2)^\eta W^{-1}$ are integrable on $\D$;
\item $\displaystyle\int_{\mathbb{D}}WdA_\eta$ is invertible.
\end{enumerate}
For such a weight $W$ the corresponding $L^2$ space on $\mathbb{D}$ is denoted $L^2(H,WdA_\eta)$ and is endowed with the norm $$\|f\|^2=\int_{\mathbb{D}}\langle W(z)f(z),f(z)\rangle dA_\eta(z).$$

We say that the weight $W$  belongs to the so-called B\'ekoll\'e-Bonami class $B_2(\eta)$ ($\eta>-1$) if
$$\sup_{S}\bigg\|\bigg(\frac{1}{A_{\eta}(S)}\int_{S}WdA_{\eta}\bigg)^{1/2}\bigg
(\frac{1}{A_{\eta}(S)}\int_{S}W^{-1}dA_{\eta}\bigg)^{1/2}\bigg\|<\infty,$$ where the supremum is taken over all Carleson squares:\\ $S=\{z=re^{it}:1-h<r<1, |t-\theta|<\pi h\},$ with $h\in(0,1)$, $\theta\in[0,2\pi).$

\noindent For $W\in B_2(\eta)$ the subspace of $L^2(H,WdA_\eta)$ consisting of $H-$valued analytic functions in $\D$ is closed in $L^2(H, W dA_\eta)$ (see \cite{aaoc}). We denote this subspace 
by $L_a^2(H,WdA_\eta).$\\

\noindent We shall also consider the "average" version of $W$ on the discs
$$D_{z,r}=\{\zeta\in\mathbb{D}: |z-\zeta|<r(1-|z|)\},\quad r\in(0,1),$$ 
given by $$[W]_{z,r}:=\frac{1}{A(D_{z,r})}\int_{D_{z,r}}WdA,\quad z\in\mathbb{D}.$$
If $W$ belongs to $B_2(\eta)$ ($\eta>-1$), it follows that $[W]_{z,r}$ is invertible, for all $z\in\D$ (see Proposition 3.2 in \cite{aaoc}).

We are now ready to characterize the nonnegative 
operator-valued functions $G$ for which $L_a^2(H, WdA_\eta)$ is continuously embedded in $L^2(H, G \,dA)$. Our result is actually 
slightly more general, in the sense that we consider embeddings for derivatives of functions in $L_a^2(H, WdA_\eta)$.

\begin{Theorem}\label{analytic}
Assume $G:\D\rightarrow \mathcal{B}(H)$ is an integrable nonnegative operator-valued function,
let $W$ be an operator-valued weight that belongs to $B_2(\eta)$ for some $\eta>-1$ and let $r\in(0,1)$. Then, for any nonnegative integer $n$, 
\begin{equation}\label{carlesonineq}
\int_\D\langle G(z) f^{(n)}(z),f^{(n)}(z)\rangle \, dA(z)\lesssim \int_\D\langle W(z) f(z),f(z)\rangle \, dA_\eta(z), \quad f\in L_a^2(H, WdA_\eta),
\end{equation}
holds if and only if 
\begin{equation}\label{carlesoncond}
\int_{D_{\l,r}} G\, dA  \lesssim (1-|\l|)^{2n} \int_{D_{\l,r}} W\, dA_\eta,
\end{equation}
where the constant involved in the last inequality above is independent of $\l$.
\end{Theorem}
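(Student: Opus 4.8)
The plan is to recast \eqref{carlesoncond} as a pointwise inequality between the averaged weights and then prove the two implications separately. Since $1-|\z|\sim1-|\l|$ for $\z\in D_{\l,r}$, we have $A(D_{\l,r})\sim(1-|\l|)^2$, $A_\eta(D_{\l,r})\sim(1-|\l|)^{\eta+2}$ and $\int_{D_{\l,r}}W\,dA_\eta\sim(1-|\l|)^\eta\int_{D_{\l,r}}W\,dA$, so that \eqref{carlesoncond} is equivalent to
$$[G]_{\l,r}\lesssim(1-|\l|)^{2n+\eta}\,[W]_{\l,r},\qquad \l\in\D.$$
A covering argument, together with the fact that the operator averages $\int_SW\,dA_\eta$ satisfy a doubling property for $W\in B_2(\eta)$, shows that this condition is independent of $r\in(0,1)$, a freedom I exploit below.

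For the necessity of \eqref{carlesoncond} I would test \eqref{carlesonineq} on $f=u_\l v$, with $v\in H$ and $u_\l(z)=(1-|\l|^2)^{b}(1-\bar\l z)^{-c}$ for a large exponent $c$. On $D_{\l,r}$ one has $|u_\l^{(n)}(z)|\sim(1-|\l|)^{b-c-n}$ (the case of $\l$ near the origin being elementary), so discarding the nonnegative part of the integral outside $D_{\l,r}$ bounds the left-hand side of \eqref{carlesonineq} below by a constant times $(1-|\l|)^{2(b-c-n)}\langle(\int_{D_{\l,r}}G\,dA)v,v\rangle$. Since $|u_\l|^2$ concentrates on $D_{\l,r}$ and the scalar measure $\langle W\,v,v\rangle\,dA_\eta$ is doubling (a consequence of $W\in B_2(\eta)$), a dyadic-annulus summation bounds the right-hand side above by a constant times $(1-|\l|)^{2(b-c)}\langle(\int_{D_{\l,r}}W\,dA_\eta)v,v\rangle$; here $c$ is taken large enough to beat the doubling constant. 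Cancelling the common power $(1-|\l|)^{2(b-c)}$ gives \eqref{carlesoncond}.

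For sufficiency, the crucial step --- and the one I expect to be the main obstacle --- is a dimension-free averaging inequality: for analytic $g$ and any nonnegative operator-valued $G$,
$$\int_\D\langle G(z)g(z),g(z)\rangle\,dA(z)\lesssim\int_\D\langle[G]_{z,r'}g(z),g(z)\rangle\,dA(z),$$
with constant depending only on $r'$. I would prove this at the operator level: by Tonelli the right-hand side equals $\int_\D\mathrm{tr}\,(G(\z)P(\z))\,dA(\z)$ with $P(\z)=\int_{\{z:\z\in D_{z,r'}\}}A(D_{z,r'})^{-1}\,g(z)\otimes\overline{g(z)}\,dA(z)\ge0$, while the left-hand side is $\int_\D\mathrm{tr}\,(G(\z)\,g(\z)\otimes\overline{g(\z)})\,dA(\z)$. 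It therefore suffices to prove the pointwise operator domination $g(\z)\otimes\overline{g(\z)}\lesssim P(\z)$, and this I obtain one direction at a time: for fixed $u\in H$ the function $z\mapsto\langle g(z),u\rangle$ is scalar analytic, so the sub-mean-value property yields $|\langle g(\z),u\rangle|^2\lesssim\langle P(\z)u,u\rangle$ once the inner radius is chosen comparable to $r'$. As $u$ is arbitrary and $G(\z)\ge0$, the constants are independent of $\dim H$, which is the source of the dimension-free character of the theorem.

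Granting this, sufficiency follows by a short chain. Applying the averaging inequality to $g=f^{(n)}$, then the reformulated condition $[G]_{z,r'}\lesssim(1-|z|)^{2n+\eta}[W]_{z,r'}$ and the identity $(1-|z|)^{2n+\eta}\,dA=\tfrac1{\eta+1}(1-|z|)^{2n}\,dA_\eta$, I bound the left-hand side of \eqref{carlesonineq} by a constant times $\int_\D(1-|z|)^{2n}\langle[W]_{z,r'}f^{(n)}(z),f^{(n)}(z)\rangle\,dA_\eta(z)$. A weighted Littlewood--Paley estimate controls this by $\int_\D\langle[W]_{z,r'}f(z),f(z)\rangle\,dA_\eta(z)$: the averaged weight $[W]_{z,r'}$ varies slowly (a property of $B_2(\eta)$ weights, see \cite{aaoc}), so on the discs of a lattice it may be frozen to a constant $V_0$, and setting $h=V_0^{1/2}f$ reduces the estimate to the classical inequality $\int(1-|z|)^{2n}\|h^{(n)}\|^2\,dA_\eta\lesssim\int\|h\|^2\,dA_\eta$, after which one sums using bounded overlap and Cauchy estimates. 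Finally the norm equivalence $\int_\D\langle[W]_{z,r'}f,f\rangle\,dA_\eta\sim\int_\D\langle Wf,f\rangle\,dA_\eta$ from \cite{aaoc}, whose nontrivial direction is exactly where $W\in B_2(\eta)$ is used, yields \eqref{carlesonineq}.
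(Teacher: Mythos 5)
Your proposal is correct, and both implications end where the paper ends (Theorem 3.1 of \cite{aaoc}), but the sufficiency half is organized differently enough to merit comparison. For necessity you and the paper do essentially the same thing: test (\ref{carlesonineq}) on kernels $(1-\bar\l z)^{-c}e$, bound the left side from below on $D_{\l,r}$, and bound the right side by $(1-|\l|)^{2(b-c)}\langle\int_{D_{\l,r}}W\,dA_\eta\,e,e\rangle$; the paper simply quotes Proposition 3.1 of \cite{aaoc} for this last estimate, whereas you re-prove it by dyadic annuli plus doubling of $\langle We,e\rangle\,dA_\eta$ --- which does follow from $B_2(\eta)$, via the operator Jensen inequality $\bigl(\frac{1}{A_\eta(S)}\int_S W^{-1}dA_\eta\bigr)^{-1}\le\frac{1}{A_\eta(S)}\int_S W\,dA_\eta$ --- so your version is more self-contained, at the cost of taking $c$ large depending on the doubling constant of $W$. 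For sufficiency the routes genuinely diverge: the paper applies Cauchy's formula to $\zeta\mapsto G^{1/2}(z)f(\zeta)$, which in one stroke freezes the weight \emph{and} trades $f^{(n)}$ for $f$, namely $\langle G(z)f^{(n)}(z),f^{(n)}(z)\rangle\lesssim(1-|z|)^{-2(n+1)}\int_{D_{z,\beta}}\langle G(z)f(\z),f(\z)\rangle\,dA(\z)$; after Fubini and (\ref{carlesoncond}) this lands directly on $\int_\D\langle[W]_{\z,r}f,f\rangle\,dA_\eta$, with no lattice and no Littlewood--Paley step. Your averaging lemma (which is correct and dimension-free, and indeed equivalent to the paper's frozen-weight subharmonicity step) is applied to $g=f^{(n)}$, so both sides still carry the derivative, and you must then discharge it with a separate weighted Littlewood--Paley inequality for $[W]_{z,r}$. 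That step is valid, but state it as what it is: by Remark 3.2 of \cite{aaoc} (relation (\ref{comp})) the averaged weight is comparable to a constant operator $V_j=[W]_{z_j,r}$ on each disc of a lattice, and then one uses the \emph{local} Cauchy estimate $\|V_j^{1/2}f^{(n)}(z)\|^2\lesssim(1-|z|)^{-2(n+1)}\int_{D_{z,\beta}}\|V_j^{1/2}f\|^2dA$ plus bounded overlap of the enlarged discs; the global ``classical inequality'' you invoke is not something one can apply on a single disc, though your closing mention of Cauchy estimates and bounded overlap indicates the right argument. Two further remarks: the $r$-independence of (\ref{carlesoncond}) you establish at the outset is never actually needed (your averaging lemma works with $r'=r$, and your necessity argument yields (\ref{carlesoncond}) for the given $r$); and the net trade-off is that the paper's proof is shorter and avoids lattices entirely, while yours is more modular, with the averaging lemma and the weighted Littlewood--Paley estimate reusable elsewhere.
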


\begin{proof}
Assume (\ref{carlesoncond}) holds. 
For each $z\in \D$, we apply Cauchy's formula to the analytic function $\zeta\mapsto G^{1/2}(z) f(\zeta)$ to obtain the estimate
\begin{eqnarray*}
\langle G(z) f^{(n)}(z), f^{(n)}(z)\rangle &\lesssim& \frac{1}{ (1-|z|)^{2(n+1)} } \int_{D_{z,\beta}} \langle G(z) f(\z), f(\z)\rangle dA(\z),
%\\&\lesssim& \frac{1}{|D_z|} \int_{D_z} \langle W(z) f(w), f(w)\rangle dA(w),
\end{eqnarray*}
%by (\ref{carlesoncond}).
where $\beta\in (0,1)$.
Now integrate the above inequality on $\D$ to deduce
\begin{eqnarray*}
\int_\D\langle G(z) f^{(n)}(z), f^{(n)}(z) \rangle dA(z) &\lesssim& \int_\D 
 (1-|z|)^{-2(n+1)}\int_{D_{z,\beta}} \langle G(z) f(\z), f(\z)\rangle dA(\z) dA(z).
 \end{eqnarray*}
 Notice that, for $0<\beta<\frac{r}{r+1}$, the inclusion $D_{z,\beta}\subseteq\widetilde{D}_{z,r}:=\{\zeta\in\mathbb{D}:|\zeta-z|<r(1-|\zeta|)\}$ holds.
 Using this together with Fubini's theorem in the last relation above we obtain
 \begin{eqnarray*}
 %\\
%(D_z\subset \tilde D_z=\{|z-\zeta|< \alpha (1-|\zeta|)\} \ for\  \alpha ...)\\
\int_\D\langle G(z) f^{(n)}(z), f^{(n)}(z) \rangle dA(z)
&\lesssim& \int_\D 
 (1-|z|)^{-2(n+1)}\int_{\widetilde{D}_{z,r}} \langle G(z) f(\z), f(\z)\rangle dA(\z) dA(z)\\
 &\lesssim& \int_\D \langle \int_{{D}_{\z,r}} G(z) dA(z)\  f(\z), f(\z)\rangle (1-|\z|)^{-2(n+1)}dA(\z) \\
 &\lesssim& \int_\D \langle \int_{{D}_{\z,r}} W dA_\eta\  f(\z), f(\z)\rangle (1-|\z|)^{-2}dA(\z) \\
 &\lesssim& \int_\D \langle [W]_{\z,r}  f(\z), f(\z)\rangle dA_\eta(\z), 
 \end{eqnarray*}
by (\ref{carlesoncond}) and since $(1-|z|)\sim(1-|\z|)$ for $z\in D_{\z,r}$.
Finally, an application of the second part of Theorem 3.1 in \cite{aaoc} now yields
\begin{eqnarray*}
\int_\D\langle G(z) f^{(n)}(z), f^{(n)}(z) \rangle dA(z)\lesssim \int_\D \langle W(\z) f(\z), f(\z)\rangle dA_\eta(\z).
\end{eqnarray*}
%\int_\D 
%\frac{1}{|D_z|} \int_{\tilde D_z} \langle W(z) f(\zeta), f(\zeta)\rangle dA(\zeta) dA(z)\\
%(Fubini)\\
%&\lesssim& \int_\D \langle [W]_\zeta f(\zeta), f(\zeta)\rangle \, dA(\zeta)\\
%(theorem\  in\  our\  paper)\\
%&\lesssim &  \int_\D \langle W(\zeta) f(\zeta), f(\zeta)\rangle \, dA(\zeta).
%\end{eqnarray*}

Conversely, assume that (\ref{carlesonineq}) holds. For $e\in H$ and $\gamma>\eta$, we put $f(z)=K^\gamma_\l(z) e=\frac{1}{(1-\bar\l z)^{\gamma+2}}e$ in (\ref{carlesonineq}) and use Proposition 3.1 in \cite{aaoc} to get
\begin{equation}\label{ineq1}
\int_\D\langle G(z) e,e\rangle |\partial_z^n K^\gamma_\l(z)|^2\, dA(z)\lesssim  
\int_\D\langle W(z) e, e\rangle |K^\gamma_\l(z)|^2 \, dA_\eta(z)\lesssim (1-|\l|)^{-2\gamma+\eta-2}\langle[W]_{\l,r} e,e\rangle.
\end{equation}
Let $\delta:=\frac{r}{r+2}$. For $|\l|\ge\delta>0$ we have
\begin{eqnarray*}
\int_\D\langle G(z) e,e\rangle |\partial_z^n K^\gamma_\l(z)|^2\, dA(z)&\gtrsim&  \int_{D_{\l,r}} \langle G(z) e,e\rangle \frac{1}{|1-\bar \l z|^{2(\gamma+n+2)}}\, dA(z)\\
&\gtrsim& (1-|\l|)^{-2\gamma-2n-4} \int_{D_{\l,r}} \langle G(z) e,e\rangle \, dA(z).
\end{eqnarray*}
Combining this with (\ref{ineq1}), we obtain (\ref{carlesoncond}) for $|\l|\ge\delta$. It is easy to see that (\ref{carlesoncond}) 
holds for $| \l |<\delta$.
%, and, with this, the proof is complete. ( 
Indeed, 
%let $\delta =\frac{r}{r+2}$ and assume $|z|<\delta$. 
in order
to prove (\ref{carlesoncond}) for $|z|<\delta$, it is enough to show that
 $$
 \int_{D_{z,r}} G \, dA \lesssim \int_{D_{z,r}} W\, dA, \quad |z|<\delta,
 $$
 or, equivalently,
$$
\| (\int_{D_{z,r}} G \, dA)^{1/2}  (\int_{D_{z,r}} W \, dA)^{-1/2} \|\lesssim 1, \quad |z|<\delta,
$$
where the invertibility of $\int_{D_{z,r}} W \, dA$ follows by Proposition 3.1 in \cite{aaoc}.
Now 
$$
\| (\int_{D_{z,r}} G \, dA)^{1/2} \|\le \| (\int_{\D} G \, dA)^{1/2}  \|.
$$
Notice that, since $ r(1-|z|)>r(1-\delta)=2\delta$, we have $D(0,\delta)\subset D_{z,r}$. Hence
$$
\| (\int_{D_{z,r}} W \, dA)^{-1/2}\|\le \| (\int_{D(0,\delta)} W \, dA)^{-1/2}\|
$$
and the desired inequality is proven.
\end{proof}

%\begin{Remark} Condition  (\ref{carlesoncond})  in the Theorem \ref{analytic} can be rewritten as
%\begin{equation*}
%\| (\int_{D_{\l,r}} G \, dA)^{1/2}  (\int_{D_{\l,r}} W \, dA_\eta)^{-1/2} \|\lesssim (1-|\l|)^{2n}.
%\end{equation*}
%\end{Remark}

\begin{Remark}
Take $W=I$, $n=0$ in Theorem \ref{analytic} and let $\mu=G\,dA$ in Remark \ref{Dyadic}. Notice that, for these choices, condition (\ref{carlesoncond}) is equivalent to 
$\| \mu \|_C <\infty$, which shows that in this case the Carleson embedding from Theorem \ref{analytic} is 
equivalent to its dyadic version provided in Remark \ref{Dyadic}. Indeed, the equivalence of the two conditions 
follows, as in the scalar case, by a purely geometric argument: for each fixed $r \in (0,1)$, we can cover any disc $D_{\l, r}$ 
by a finite number (depending only on $r$) of top halves $T_I$, with $I \in {\mathcal D}$, of comparable diameter, and 
vice-versa.
\end{Remark}

% since for a nonnegative operator $A$ we have $\|A\|=\sup_{\| e\|=1} \langle e, e\rangle$.

%Nevertheless, in general (\ref{carlesoncond}) is not equivalent to
%\begin{equation*}
%\|\int_{D_\l} G(z) \, dA(z)\| \lesssim \|\int_{D_\l}W (z)\, dA(z)\|, 
%\end{equation*}
%take e.g. $G(z)\equiv 1/2 I$ and $W(z)\equiv diag(1,1/2,..., 1/n,...)$ for which
%$\|G(z)\|\le \|W(z)\|$, but one cannot find a constant $C>0$ such that
%$G(z)\le C W(z)$.
%\bigskip

%${\bf The\ operator\  T_G}$
We can now apply Theorem \ref{analytic} to characterize the boundedness of integration operators of Volterra type
acting on vector-valued Bergman spaces with B\'ekoll\'e-Bonami weights.
For an analytic operator-valued function $G:\D\rightarrow \mathcal{B}(H)$, and for analytic functions  $f:\D\rightarrow H$, we define the operator
$$
T_G f(z)=\int_0^z G'(\zeta) f(\zeta) d\z,\quad z\in\D.
%\quad f\in L_a^2(H, WdA_\eta),
$$
%where $W$ is an operator-valued Bekolle weight..
\begin{Corollary} Assume $W:\D\rightarrow  \mathcal{B}(H)$ 
is an operator-valued weight that belongs to $B_2(\eta)$ for some $\eta>-1$ and let $r\in(0,1)$.
Then $T_G$ is bounded on $L_a^2(H, WdA_\eta)$ if and only if
\begin{equation}\label{bounded}
\sup_{\l\in\D} (1-|\l|)\|[W]_{\l,r}^{1/2} G'(\l) [W]_{\l,r}^{-1/2}\|<\infty.
\end{equation}
\end{Corollary}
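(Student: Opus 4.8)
The plan is to reduce the statement to a Carleson embedding of the type handled by Theorem \ref{analytic} and then to convert the resulting integrated condition into the pointwise condition (\ref{bounded}). Everything rests on the two elementary identities $(T_G f)'=G'f$ and $(T_G f)(0)=0$, valid for analytic $f:\D\to H$. The first step is a derivative description of the norm: for analytic $h:\D\to H$ with $h(0)=0$ one has
\[
\|h\|_{L_a^2(H,WdA_\eta)}^2\sim\int_\D\langle W(z)h'(z),h'(z)\rangle\,(1-|z|^2)^2\,dA_\eta(z).
\]
The inequality ``$\gtrsim$'' is exactly Theorem \ref{analytic} with $n=1$ and $G(z)=(\eta+1)(1-|z|)^\eta(1-|z|^2)^2W(z)$, since for this choice condition (\ref{carlesoncond}) reduces, after using $(1-|z|^2)^2\sim(1-|\l|)^2$ on $D_{\l,r}$, to the trivially true $\int_{D_{\l,r}}(1-|z|^2)^2W\,dA_\eta\lesssim(1-|\l|)^2\int_{D_{\l,r}}W\,dA_\eta$. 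The reverse inequality ``$\lesssim$'' is a Littlewood--Paley estimate; applying the averaging equivalence $\int_\D\langle Wg,g\rangle\,dA_\eta\sim\int_\D\langle[W]_{z,r}g,g\rangle\,dA_\eta$ from Theorem 3.1 in \cite{aaoc} to $g=h$ and to $g=h'$ reduces it to the same statement for the slowly varying operator weight $[W]_{z,r}$, which on each Carleson box is comparable to a constant operator.

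Applying this to $h=T_Gf$ and invoking $(T_Gf)'=G'f$ yields
\[
\|T_Gf\|_{L_a^2(H,WdA_\eta)}^2\sim\int_\D\langle \mathcal G(z)f(z),f(z)\rangle\,dA(z),\qquad \mathcal G(z):=(\eta+1)(1-|z|)^\eta(1-|z|^2)^2\,G'(z)^*W(z)G'(z).
\]
Thus $T_G$ is bounded on $L_a^2(H,WdA_\eta)$ precisely when the embedding (\ref{carlesonineq}) holds with $n=0$ and with $G$ replaced by $\mathcal G$. By Theorem \ref{analytic} this is equivalent to (\ref{carlesoncond}) for $\mathcal G$, i.e. $\int_{D_{\l,r}}\mathcal G\,dA\lesssim\int_{D_{\l,r}}W\,dA_\eta$, which after cancelling the comparable factor $(1-|z|)^\eta\sim(1-|\l|)^\eta$ and using $(1-|z|^2)^2\sim(1-|\l|)^2$ on $D_{\l,r}$ becomes
\[
(1-|\l|)^2\int_{D_{\l,r}}G'(z)^*W(z)G'(z)\,dA(z)\lesssim\int_{D_{\l,r}}W\,dA ;
\]
denote this condition by $(\star)$. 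Since $\int_{D_{\l,r}}W\,dA\sim(1-|\l|)^2[W]_{\l,r}$ as operators, $(\star)$ reads $\int_{D_{\l,r}}G'(z)^*W(z)G'(z)\,dA\lesssim[W]_{\l,r}$, while squaring (\ref{bounded}) turns it into the pointwise operator inequality $(1-|\l|)^2\,G'(\l)^*[W]_{\l,r}G'(\l)\lesssim[W]_{\l,r}$. It remains to prove these two conditions are equivalent.

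For sufficiency I would argue directly: assuming (\ref{bounded}), apply the averaging equivalence of \cite{aaoc} to the analytic functions $G'f$ and $f$ to obtain $\int_\D\langle WG'f,G'f\rangle(1-|z|^2)^2dA_\eta\sim\int_\D\langle[W]_{z,r}G'f,G'f\rangle(1-|z|^2)^2dA_\eta$, then feed in the pointwise inequality $(1-|z|^2)^2\langle[W]_{z,r}G'(z)v,G'(z)v\rangle\lesssim\langle[W]_{z,r}v,v\rangle$ with $v=f(z)$ and integrate, and finally use the averaging equivalence once more to return to $\int_\D\langle Wf,f\rangle dA_\eta=\|f\|^2$. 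This direction is clean and dimension-free because it only combines the pointwise weighted bound with the $B_2(\eta)$ averaging, both of which preserve the operator order.

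The main obstacle is the converse (necessity of (\ref{bounded})). First, testing the boundedness of $T_G$ on the reproducing kernels $f=K_\l^\gamma e$, exactly as in the derivation of (\ref{ineq1})--(\ref{carlesoncond}), produces $(\star)$. To upgrade $(\star)$ to the pointwise bound one cannot differentiate the weight, so I would freeze it at the centre: for fixed $\l$ the map $z\mapsto[W]_{\l,r}^{1/2}G'(z)e$ is analytic (the constant operator $[W]_{\l,r}^{1/2}$ does not destroy analyticity), hence $\|[W]_{\l,r}^{1/2}G'(z)e\|^2$ is subharmonic and the sub-mean-value inequality gives
\[
(1-|\l|)^2\langle[W]_{\l,r}G'(\l)e,G'(\l)e\rangle\lesssim\int_{D_{\l,\rho}}\langle[W]_{\l,r}G'(z)e,G'(z)e\rangle\,dA(z)
\]
for a suitable $\rho<r$. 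Replacing $[W]_{\l,r}$ by the comparable $[W]_{z,r}$ (Proposition 3.2 in \cite{aaoc}) and passing from $[W]_{z,r}$ back to $W(z)$ by the localized averaging equivalence bounds the right-hand side by $\int_{\widetilde{D}}\langle W(z)G'(z)e,G'(z)e\rangle\,dA\lesssim\langle[W]_{\l,r}e,e\rangle$ via $(\star)$, which is exactly (\ref{bounded}). The delicate point here, and the reason the scalar proof (subharmonicity of $|g'|^2$ together with the $A_2$ property) does not transfer verbatim, is that $z\mapsto\langle W(z)G'(z)e,G'(z)e\rangle$ is \emph{not} subharmonic because $W^{1/2}$ is not analytic; the freezing trick and the operator-valued averaging equivalence of \cite{aaoc} are what replace it and keep all constants independent of $\dim H$.
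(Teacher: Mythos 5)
Your overall architecture follows the paper's: a Littlewood--Paley description of $\|T_Gf\|$, reduction to a Carleson condition via Theorem \ref{analytic}, and a freeze-the-weight-plus-subharmonicity step to reach (\ref{bounded}). Your sufficiency direction is essentially sound, and is in fact a little more direct than the paper's chain of equivalences, modulo two points that should simply be cited rather than sketched: the lower Littlewood--Paley bound $\|h\|^2\lesssim\int_\D\langle Wh',h'\rangle(1-|z|^2)^2\,dA_\eta$ for $h(0)=0$ is exactly Theorem 3.2 in \cite{aaoc} (your reduction to a ``slowly varying weight comparable to a constant operator on each box'' is not by itself a proof), and applying the averaging equivalence against the measure $(1-|z|^2)^2dA_\eta\sim dA_{\eta+2}$ tacitly uses $W\in B_2(\eta+2)$, i.e. the inclusion $B_2(\eta)\subset B_2(\eta+2)$.

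The genuine gap is in the necessity direction, at the step where you pass ``from $[W]_{z,r}$ back to $W(z)$ by the localized averaging equivalence'', that is,
\begin{equation*}
\int_{D_{\l,\rho}}\langle [W]_{z,r}\,G'(z)e,G'(z)e\rangle\,dA(z)\ \lesssim\ \int_{D_{\l,r'}}\langle W(z)G'(z)e,G'(z)e\rangle\,dA(z).
\end{equation*}
No such statement is available in \cite{aaoc}: Theorem 3.1 there is a \emph{global} equivalence of integrals over all of $\D$, and the direction you need (mollified weight dominated by the weight itself) is precisely the direction that uses the $B_2$ condition in an essential way; it does not follow from subharmonicity and (\ref{comp}). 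In the scalar case one can prove the local version by writing $|g(z)|\le\frac{1}{A(D_{z,s})}\int_{D_{z,s}}|g|\,w^{1/2}w^{-1/2}dA$ and using Cauchy--Schwarz together with the $B_2$ bound, but the operator analogue of that argument forces an estimate of the form $\int_{D_{z,s}}\|[W]_{\l,r}^{1/2}W^{-1}(u)[W]_{\l,r}^{1/2}\|\,dA(u)\lesssim\|[W]_{\l,r}^{1/2}\bigl(\int_{D_{z,s}}W^{-1}dA\bigr)[W]_{\l,r}^{1/2}\|$, i.e. pulling the operator norm outside the integral in the wrong direction --- in dimension $n$ this costs a factor of $n$ (via the trace), which is exactly the dimension-dependence this paper is designed to avoid. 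The paper sidesteps the issue by reversing the order of operations: since $L_a^2(H,WdA_\eta)=L_a^2(H,[W]_{\cdot,r}dA_\eta)$ with equivalent norms (Theorem 3.1 in \cite{aaoc}), boundedness of $T_G$ is characterized from the outset by the Carleson condition for the mollified weight, namely (\ref{bdd}), which already carries $[W]_{z,r}$ inside the integral; then (\ref{comp}) freezes it to $[W]_{\l,r}$ and subharmonicity of $z\mapsto\|[W]_{\l,r}^{1/2}G'(z)[W]_{\l,r}^{-1/2}e\|$ yields (\ref{bounded}) with no local transfer between $W$ and its averages. Your argument is repaired by the same move: from boundedness of $T_G$ derive your condition $(\star)$ not for $W$ but for the weight $[W]_{\cdot,r}$, after which your freezing step goes through verbatim.
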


\begin{proof}
For $f\in L_a^2(H, WdA_\eta)$ we apply Theorem 3.2 in \cite{aaoc} to deduce
\begin{eqnarray*}
\|T_G f\|^2&\sim& \int_\D \langle W(z) G'(z)f(z), G'(z)f(z)\rangle (1-|z|^2)^2 \, dA_\eta(z)\\
&=& \int_\D \langle G'^*(z) W(z)G'(z)f(z), f(z)\rangle (1-|z|^2)^2 \, dA_\eta(z).
\end{eqnarray*}
By Theorem \ref{analytic} it follows that $T_G$ is bounded on $L_a^2(H, WdA_\eta)$ if and only if
$$
\int_{D_{\l,r}} G'^*(z) W(z)G'(z) (1-|z|^2)^2\, dA_\eta(z)\lesssim \int_{D_{\l,r}} W(z)\, dA_\eta(z).
$$
As shown in Theorem 3.1 in \cite{aaoc}, the weight $z\mapsto [W]_{z,r}$ provides an equivalent norm for $L_a^2(H, WdA_\eta)$,
and hence we can replace $W$ by $[W]_{z,r}$ above to obtain that $T_G$ is bounded on $L_a^2(H, WdA_\eta)$ if and only if
\begin{equation}\label{bdd}
\int_{D_{\l,r}} G'^*(z) [W]_{z,r} G'(z) (1-|z|^2)^2\, dA_\eta(z)\lesssim \int_{D_{\l,r}} [W]_{z,r}\, dA_\eta(z),\quad \l\in\D.
\end{equation}
According to Remark 3.2 in \cite{aaoc}, we have 
\begin{equation}\label{comp}
k_1[W]_{\l,r}\le [W]_{z,r}\le k_2 [W]_{\l,r},\quad z\in D_{\l,r},
\end{equation}
where the constants $k_1,k_2>0$ are independent of $z,\l\in\D$. From this we deduce that (\ref{bdd}) is in its turn equivalent to
\begin{equation*}
\int_{D_{\l,r}} G'^*(z) [W]_{\l,r} \,G'(z)\, dA(z)\lesssim  [W]_{\l,r},\quad \l\in\D.
\end{equation*}
Notice that the last relation above can be written as
\begin{equation}\label{bdd1}
\int_{D_{\l,r}} \| [W]^{1/2}_{\l,r} \,G'(z)\, [W]^{-1/2}_{\l,r} e\|^2\, dA(z)\lesssim  \|e\|^2,\quad e\in H,\  \l\in\D.
\end{equation}
We now claim that (\ref{bdd1}) is equivalent to (\ref{bounded}).
Indeed, using the subharmonicity of $z\mapsto \| [W]^{1/2}_{\l,r} \,G'(z)\, [W]^{-1/2}_{\l,r} e\| $ in (\ref{bdd1}) we get
\begin{equation}\label{bdd2}
(1-|\l|)^2\| [W]^{1/2}_{\l,r} \,G'(\l)\, [W]^{-1/2}_{\l,r} e\|^2\lesssim  \|e\|^2,\quad e\in H,\ \l\in\D,
\end{equation}
and hence one implication is proven. The converse follows by integrating (\ref{bdd2}) with respect to $\l$ on $D_{\z,r}$ and using relation (\ref{comp}).
\end{proof}
%For $W(z)\equiv I$ the above condition becomes $(1-|z|^2)\|G'(z)\|\le C$. 


\begin{thebibliography}{99}

\bibitem{aaoc} A. Aleman and O. Constantin, {\it The Bergman projection on vector-valued $L^2$-spaces with operator-valued weights},  Journal of Functional Analysis, {\bf 262}  (2012), 2359--2378.

\bibitem{1}  
A. Aleman and O. Constantin, ``Hankel operators on Bergman spaces and 
similarity to contractions'', {\it Internat. Math. Res. Notices} 
{\bf 35} (2004), 1785--1801. 

\bibitem{blasco} J. L. Arregui and O. Blasco, 
``Multipliers on vector valued Bergman spaces", {\it  Canad. J. Math.} {\bf 54} (2002), no. 6, 1165--1186.

\bibitem{pott}
T. A. Gillespie, S. Pott, S. Treil, A. Volberg, ``Logarithmic growth for weighted Hilbert transforms and vector Hankel operators",
{\it J. Operator Theory} {\bf 52} (2004), 103--112. 

\bibitem{ntv} F. Nazarov, S. Treil, A. Volberg,  ``Counterexample to the infinite-dimensional Carleson embedding theorem",
{\it C. R. Acad. Sci. Paris SŽr. I Math.} {\bf 325} (1997),  383--388.

\bibitem{nptv}
F. Nazarov, G. Pisier, S. Treil and A. Volberg, 
``Sharp estimates in vector Carleson imbedding theorem and for vector paraproducts",
{\it J. Reine Angew. Math.} {\bf 542} (2002), 147--171. 


\bibitem{tv}
S. Treil and A. Volberg,
``Wavelets and the angle between past and future",
{\it J. Funct. Anal.} {\bf 143} (1997),  269--308. 

\end{thebibliography}
\end{document}